\newtheorem{theorem}{Theorem}[section]
\newtheorem{corollary}[theorem]{Corollary}
\newtheorem{lemma}[theorem]{Lemma}
\newtheorem{proposition}[theorem]{Proposition}
\newtheorem{remark}[theorem]{Remark}
\theoremstyle{definition}
\title{On the $k$-independence number of graph products}
\author{Aida Abiad\thanks{\texttt{a.abiad.monge@tue.nl}, Department of Mathematics and Computer Science, Eindhoven University of Technology, The Netherlands\newline Department of Mathematics: Analysis, Logic and Discrete Mathematics, Ghent University, Belgium\newline Department of Mathematics and Data Science of Vrije Universiteit Brussel, Belgium}\quad \qquad Hidde Koerts \thanks{\texttt{hkoerts@uwaterloo.ca}, Department of Combinatorics and Optimization, University of Waterloo, Canada}
}
\begin{document}

\date{} 
\maketitle

\begin{abstract}
The $k$-independence number of a graph, $\alpha_k(G)$, is the maximum size of a set of vertices at pairwise distance greater than $k$, or alternatively, the independence number of the $k$-th power graph $G^k$. Although it is known that $\alpha_k(G)=\alpha(G^k)$, this, in general, does not hold for most graph products, and thus  the existing bounds for $\alpha$ of graph products cannot be used. In this paper we present sharp upper bounds for the $k$-independence number of several graph products. In particular, we focus on the Cartesian, tensor, strong, and lexicographic products. Some of the bounds previously known in the literature for $k=1$ follow as corollaries of our main results.
\end{abstract}

%%%%%%%%%%%%%%%%%%%%%%%%%%%%%%%%%%%%%%%%%%%%%%%%%%%%%%%%%%%%%%%%%%%%%%%%%%%%%%%%%%%%%%%%%%%%%%%
\section{Introduction}
%%%%%%%%%%%%%%%%%%%%%%%%%%%%%%%%%%%%%%%%%%%%%%%%%%%%%%%%%%%%%%%%%%%%%%%%%%%%%%%%%%%%%%%%%%%%%%

Consider two graphs $G_1, G_2$ and $k \geq 1$. A vertex set $S \subseteq V(G)$ is said to
be $k$\nobreakdash-\emph{independent} if $u, v \in S$ implies $\delta_G(u, v) > k$ where $\delta_G(u, v)$ is the shortest
distance between vertices $u$ and $v$ in graph $G$. The $k$-\emph{independence number} of graph $G$, denoted by $\alpha_k(G)$, is the size of
the largest $k$-independent vertex set in graph $G$. When $k = 1$ this reduces to the standard definition of independence number. We note that several conflicting definitions of the $k$-independence number are used in existing literature, all generalizing the concept of the independence number, for an overview see \cite[Section 1]{Abiad2019OnGraphs}.

The $k$-\emph{th graph power} $G^k$ of a graph $G$ is the graph whose vertex set is $V(G)$ in which two distinct vertices are adjacent if and only if their distance in graph $G$ is at most $k$. The $k$-independence number is equivalently defined as the independence number of the power graph, that is, $\alpha_k(G) = \alpha(G^k)$. However, even the simplest algebraic or combinatorial parameters of power graph $G^k$ cannot be deduced easily from the corresponding parameters of the graph $G$.  For instance, in general neither the spectrum~\cite{Das2013LaplacianGraph},~\cite[Section 2]{Abiad2022}, nor the average degree~\cite{Devos2013AveragePowers}, nor the rainbow connection number~\cite{Basavaraju2014RainbowProducts} can be derived directly from the original graph. This provides the main initial motivation for this work. 

The $k$-independence number of a graph has received a considerable amount of attention over the last years. From the complexity point of view, Kong and Zhao~\cite{Kong1993OnSets}, who showed that for every $k\geq 2$, determining $\alpha_k(G)$ is NP-complete for general graphs, and it remains NP-complete when restricting to regular bipartite graphs \cite{kz2000}. There are several other algorithmic results on $\alpha_k$, see for instance the work by Duckworth and Zito~\cite{Duckworth2003LargeGraphs} or Hota, Pal and Pal~\cite{HPP2001}. Since the $k$\nobreakdash-independence number is an NP-hard parameter, it is desirable to obtain sharp upper bounds. In this regard, Firby and Haviland~\cite{Firby1997IndependenceGraphs} proved an upper bound for $\alpha_k(G)$ in terms of the average distance in an $n$-vertex connected graph. Li and Wu~\cite{lw2021} showed sharp upper bounds on $\alpha_k$ for $t$-connected graphs. The $k$-independence number has also been studied from an algebraic point of view by Abiad~et~al.~\cite{Abiad2016SpectralGraph,Abiad2019OnGraphs,Abiad2022}, Fiol~\cite{Fiol2020ANumber} and O~et~al.~\cite{Taoqiu2019SharpGraphs}. Wocjan~et~al.~\cite{Wocjan2019SpectralGraph} have shown bounds on the quantum $k$-independence number, a related parameter which is used to measure the benefit of quantum entanglement. For each fixed integer $k \geq 2$ and $r\geq 3$, Beis, Duckworth, and Zito~\cite{bdz2005} proved several upper bounds for $\alpha_k(G)$ in random $r$-regular graphs. The $k$-independence number has also been studied in the context of the random graph $G_{n,p}$ by Atkinson and Frieze~\cite{AF2003}.

%The study of the asymptotic behaviour of various graph parameters of the graph powers of a fixed graph $G$ is motivated by questions in various areas and leads to many intriguing problems, see~\cite{Alon02graphpowers} for an overview.

In this paper we show several sharp upper bounds for the $k$-independence number of graph products. For a pair of graphs $G_1, G_2$, we consider the Cartesian product, the tensor product, the strong product, and the lexicographic product, denoted by $G_1 \square G_2$, $G_1 \times G_2$, $G_1 \boxtimes G_2$, and $G_1 \cdot G_2$, respectively. We note that the tensor product is also known as the direct product, the Kronecker product, and the categorical product. The vertex set for all these product graphs is given by the Cartesian product of the vertex sets $V_1$ and $V_2$. The edge sets of the product graphs are given as follows:
	{\footnotesize
	\begin{align*}
		& E(G_1 \square G_2) = \{((u_1, u_2), (v_1, v_2))\, |\, (u_1 = v_1 \land (u_2, v_2) \in E(G_2)) \lor (u_2 = v_2 \land (u_1, v_1) \in E(G_1))\}\\
		& E(G_1 \times G_2) = \{((u_1, u_2), (v_1, v_2))\, |\, (u_1, v_1) \in E(G_1) \land (u_2, v_2) \in E(G_2)\}\\
		& E(G_1 \boxtimes G_2) = E(G_1 \square G_2) \cup E(G_1 \times G_2)\\
		& E(G_1 \cdot G_2) = \{((u_1, u_2), (v_1, v_2)) , | \, (u_1, v_1) \in E(G_1) \lor (u_1 = v_1 \land (u_2, v_2) \in E(G_2))\}
	\end{align*}}
There are several well-known results for the independence number of graph products. Instances of it are the work of Vizing~\cite{Vizing1963CartesianGraphs}, Sonnemann and~Krafft~\cite{Sonnemann1974IndependenceGraphs}, Jha and~Slutzki~\cite{Jha1994IndependenceGraphs}, Klav\v{z}ar~\cite{Klavzar2005SomeGraphs}, Jha and~Klav\v{z}ar~\cite{Jha1998IndependenceID}, Geller and Stahl~\cite{GELLER197587}, and \v{S}pacapan~\cite{SPACAPAN20111377}, among others. Although $\alpha_k(G)=\alpha(G^k)$, in general it does not hold that the $k$-independence number of the product of two graphs is equivalent to the independence number of the product of the corresponding two graph powers (in fact,  this only holds for the strong product out of the four considered graph products). In this paper we provide new tight bounds for the $k$-independence number of the most well known graph products: the strong product (Section~\ref{sec:strongproduct}), the Cartesian product (Section~\ref{sec:cartesianproduct}), the tensor product (Section~\ref{sec:tensorproduct}), and the lexicographic product (Section~\ref{sec:lexicographicproduct}). 
Some of the bounds previously known in the literature for $k=1$ follow as corollaries of our main results.

%%%%%%%%%%%%%%%%%%%%%%%%%%%%%%%%%%%%%%%%%%
\section{Strong product}\label{sec:strongproduct}
%%%%%%%%%%%%%%%%%%%%%%%%%%%%%%%%%%%%%%%%%%

In this section we will show that one can use the equivalence $\alpha_k(G)=\alpha(G^k)$ to upper bound the independence number of the strong product of two graphs. To that aim, we first need some preliminary results.

\begin{proposition}
	\label{prop:strongproductgraphdistance}
	{\cite[Proposition~5.4]{handbook}}
	For any two graphs $G_1, G_2$, it holds for any pair of vertices $(u_1, u_2), (v_1, v_2) \in V(G_1) \times V(G_2)$ in the product graph $G_1 \boxtimes G_2$ that \[\delta_{G_1 \boxtimes G_2} ((u_1, u_2), (v_1, v_2)) = \max (\delta_{G_1}(u_1, v_1), \delta_{G_2}(u_2, v_2)).\] 
\end{proposition}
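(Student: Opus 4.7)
The plan is to establish the claimed equality by proving the two opposite inequalities. Let $d_1 := \delta_{G_1}(u_1, v_1)$ and $d_2 := \delta_{G_2}(u_2, v_2)$, and write $d := \max(d_1, d_2)$. Without loss of generality I assume $d_1 \geq d_2$, so that $d = d_1$.

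For the upper bound $\delta_{G_1 \boxtimes G_2}((u_1, u_2), (v_1, v_2)) \leq d$, I would construct an explicit walk of length $d$ in the strong product. Fix shortest paths $u_1 = x_0, x_1, \ldots, x_{d_1} = v_1$ in $G_1$ and $u_2 = y_0, y_1, \ldots, y_{d_2} = v_2$ in $G_2$, and define a sequence $(z_0, z_1, \ldots, z_{d_1})$ in $V(G_1) \times V(G_2)$ by setting $z_i := (x_i, y_i)$ for $0 \leq i \leq d_2$ and $z_i := (x_i, v_2)$ for $d_2 \leq i \leq d_1$. Each consecutive pair $(z_i, z_{i+1})$ is an edge of $G_1 \boxtimes G_2$: for $i < d_2$ both coordinates advance along edges, giving a tensor-type edge; for $i \geq d_2$ only the first coordinate advances while the second stays equal to $v_2$, giving a Cartesian-type edge. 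In either case the pair lies in $E(G_1 \square G_2) \cup E(G_1 \times G_2) = E(G_1 \boxtimes G_2)$, and the walk starts at $(u_1, u_2)$ and ends at $(v_1, v_2)$.

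For the lower bound $\delta_{G_1 \boxtimes G_2}((u_1, u_2), (v_1, v_2)) \geq d$, I would take any walk $(u_1, u_2) = w_0, w_1, \ldots, w_\ell = (v_1, v_2)$ in the strong product and project onto each factor. From the definition of $E(G_1 \boxtimes G_2)$, going from $w_i$ to $w_{i+1}$ the first coordinate is either equal or adjacent in $G_1$ (and similarly in $G_2$). Collapsing consecutive repetitions in the projected sequence yields a walk from $u_1$ to $v_1$ in $G_1$ of length at most $\ell$, forcing $\ell \geq d_1$. The symmetric argument applied to the second coordinate gives $\ell \geq d_2$, hence $\ell \geq d = \max(d_1, d_2)$.

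The argument is elementary and has no real obstacle; the only points requiring a little care are the interleaving in the upper-bound construction when the two factor distances differ (making sure the last $d_1 - d_2$ steps use the Cartesian-type edges rather than attempting a tensor-type edge with an unavailable second coordinate), and the lazy-walk projection in the lower bound (allowing equal consecutive projected vertices before collapsing to a genuine $G_i$-walk). Combining the two inequalities yields the stated identity.
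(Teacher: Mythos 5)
Your proof is correct and complete: the two-phase walk (advance both coordinates along shortest paths while possible, then finish with Cartesian-type steps in the longer factor) gives the upper bound $\leq \max(d_1,d_2)$, and projecting an arbitrary product walk onto each factor and collapsing lazy steps gives the lower bound, with the degenerate cases (equal factor distances, or an infinite factor distance making the lower-bound projection show no walk exists) handled or trivially handleable. Note that the paper itself offers no proof of this proposition---it is imported verbatim from the cited handbook (Proposition~5.4 there)---and your argument is essentially the standard proof of that reference, so there is nothing to reconcile against the paper's text.
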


For the strong product of two graphs, the next result shows that one can use existing bounds for the independence number on power graphs.

\begin{lemma}
	\label{lem:strongproductpowerequivalence}
	For any graphs $G_1, G_2$, it holds that $G_1^k \boxtimes G_2^k = (G_1 \boxtimes G_2)^k$.
\end{lemma}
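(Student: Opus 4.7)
The plan is to verify that both graphs have the same vertex set (trivially $V(G_1)\times V(G_2)$) and then show that they have the same edge set by translating adjacency on both sides into conditions on $\delta_{G_1}$ and $\delta_{G_2}$, with Proposition~\ref{prop:strongproductgraphdistance} doing the main work on one side.

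First I would handle the right-hand side. Two distinct vertices $(u_1,u_2),(v_1,v_2)$ are adjacent in $(G_1\boxtimes G_2)^k$ iff $\delta_{G_1\boxtimes G_2}((u_1,u_2),(v_1,v_2))\le k$. By Proposition~\ref{prop:strongproductgraphdistance}, this distance equals $\max(\delta_{G_1}(u_1,v_1),\delta_{G_2}(u_2,v_2))$, so the adjacency condition becomes the conjunction $\delta_{G_1}(u_1,v_1)\le k$ and $\delta_{G_2}(u_2,v_2)\le k$.

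Next I would handle the left-hand side by unpacking the strong-product edge definition applied to the factors $G_1^k$ and $G_2^k$: $(u_1,u_2)(v_1,v_2)\in E(G_1^k\boxtimes G_2^k)$ iff one of (i) $u_1=v_1$ and $(u_2,v_2)\in E(G_2^k)$, (ii) $u_2=v_2$ and $(u_1,v_1)\in E(G_1^k)$, (iii) $(u_1,v_1)\in E(G_1^k)$ and $(u_2,v_2)\in E(G_2^k)$. Since $(x,y)\in E(H^k)$ is precisely $0<\delta_H(x,y)\le k$, and since the two vertices are distinct so at least one coordinate differs, these three cases collapse into the single condition $\delta_{G_1}(u_1,v_1)\le k$ and $\delta_{G_2}(u_2,v_2)\le k$, where the distance is taken to be $0$ when the coordinates coincide.

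Comparing the two translated conditions shows the edge sets agree, proving the claim. There is no real obstacle here; the only thing to be careful about is the case split caused by the fact that $G_i^k$ has no loops, so $u_i=v_i$ does not give an edge in $G_i^k$ even though $\delta_{G_i}(u_i,v_i)=0\le k$. The three-clause definition of the strong product absorbs exactly these boundary cases, which is why the equality holds on the nose rather than up to loops.
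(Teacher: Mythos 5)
Your proposal is correct and follows essentially the same route as the paper's proof: both verify the vertex sets coincide trivially, then translate adjacency on each side into the condition $\delta_{G_1}(u_1,v_1)\le k$ and $\delta_{G_2}(u_2,v_2)\le k$, using Proposition~\ref{prop:strongproductgraphdistance} to handle the $(G_1\boxtimes G_2)^k$ side. Your explicit attention to the loopless boundary case (where $u_i=v_i$) is precisely what the paper's disjunction ``$(u_i,v_i)\in E(G_i^k)$ or $u_i=v_i$'' absorbs, so the two arguments match.
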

\begin{proof}
	Let the graphs $G_1$ and $G_2$ be given. We first observe that 
	\begin{align*}
	V(G_1^k \boxtimes G_2^k) &= V(G_1^k) \times V(G_2^k)\\
	&= V(G_1) \times V(G_2)\\
	&= V(G_1 \boxtimes G_2)\\
	&= V((G_1 \boxtimes G_2)^k).
	\end{align*}
	It thus remains to show that $E(G_1^k \boxtimes G_2^k) = E((G_1 \boxtimes G_2)^k)$. Let $(u_1, u_2)$ and $(v_1, v_2)$ be two distinct elements in the set $V(G_1) \times V(G_2)$. By the definition of the strong graph product, we note that $((u_1, u_2), (v_1, v_2))$ forms an edge in graph $G_1^k \boxtimes G_2^k$ if and only if $(u_1, v_1) \in E(G_1^k)$ or $u_1 = v_1$, and $(u_2, v_2) \in E(G_2^k)$ or $u_2 = v_2$. By the definition of graph powers, this holds if and only if
	$$\delta_{G_1}(u_1, v_1), \delta_{G_2}(u_2, v_2) \leq k.$$ 
	We note that this expression is in turn equivalent to the inequality 
	$$\max(\delta_{G_1}(u_1, v_1), \delta_{G_2}(u_2, v_2)) \leq k.$$ 
	By Proposition~\ref{prop:strongproductgraphdistance}, this inequality is then equivalent to 
	$$\delta_{G_1 \boxtimes G_2}((u_1, u_2), (v_1, v_2)) \leq k.$$ 
	Finally, we observe that by the definition of graph powers, the inequality \linebreak $\delta_{G_1 \boxtimes G_2}((u_1, u_2), (v_1, v_2)) \leq k$ holds if and only if $((u_1, u_2), (v_1, v_2)) \in E((G_1 \boxtimes G_2)^k)$. Thus, $((u_1, u_2), (v_1, v_2)) \in E(G_1^k \boxtimes G_2^k)$ if and only if $((u_1, u_2), (v_1, v_2)) \in E((G_1 \boxtimes G_2)^k)$. Therefore, $E(G_1^k \boxtimes G_2^k) = E((G_1 \boxtimes G_2)^k)$, as desired.
\end{proof}

\begin{corollary}\label{cor:strongproductrelation}
	For any graphs $G_1, G_2$, it holds that $\alpha (G_1^k \boxtimes G_2^k) = \alpha_k(G_1 \boxtimes G_2)$.
\end{corollary}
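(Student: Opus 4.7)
The corollary is essentially immediate from the preceding lemma combined with the defining identity $\alpha_k(H) = \alpha(H^k)$ stated in the introduction. The plan is therefore a two-line chain of equalities with no real content beyond substitution.

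Concretely, I would start from the left-hand side $\alpha(G_1^k \boxtimes G_2^k)$, apply Lemma~\ref{lem:strongproductpowerequivalence} to rewrite $G_1^k \boxtimes G_2^k$ as $(G_1 \boxtimes G_2)^k$, and then invoke the general equivalence $\alpha((G_1 \boxtimes G_2)^k) = \alpha_k(G_1 \boxtimes G_2)$ to conclude. That is, the display reads
\[
\alpha(G_1^k \boxtimes G_2^k) \;=\; \alpha\bigl((G_1 \boxtimes G_2)^k\bigr) \;=\; \alpha_k(G_1 \boxtimes G_2),
\]
where the first equality is Lemma~\ref{lem:strongproductpowerequivalence} and the second is the definitional identity recalled in the introduction.

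There is no real obstacle here: the work was done in Lemma~\ref{lem:strongproductpowerequivalence}, and Corollary~\ref{cor:strongproductrelation} simply records the consequence that for the strong product, bounding the $k$-independence number reduces exactly to bounding an ordinary independence number of a product of powers. Consequently, any existing upper bound on $\alpha(H_1 \boxtimes H_2)$ from the literature can be applied with $H_i = G_i^k$ to yield a corresponding upper bound on $\alpha_k(G_1 \boxtimes G_2)$, which is presumably the reason the corollary is stated explicitly.
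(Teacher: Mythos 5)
Your proof is correct and is exactly the argument the paper intends (the corollary is stated without proof precisely because it follows by substituting Lemma~\ref{lem:strongproductpowerequivalence} into the identity $\alpha_k(H)=\alpha(H^k)$ with $H = G_1 \boxtimes G_2$). Your closing observation about applying existing bounds with $H_i = G_i^k$ also matches the paper's stated motivation for recording the corollary.
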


Note that Corollary \ref{cor:strongproductrelation} implies that the existing upper bounds for $\alpha$ for the strong product of two graphs (see for instance Jha and~Slutzki~\cite[Theorem~2.6]{Jha1994IndependenceGraphs}) can be used.
As an application, one can easily extend \cite[Theorem~2.6]{Jha1994IndependenceGraphs} to the $k$-independence number $\alpha_k$.

\begin{theorem}
	\label{thm:product_graphs_strong_lower_bound}
	For all graphs $G_1, G_2$, \[\alpha_k(G_1 \boxtimes G_2) \geq \alpha_k(G_1) \cdot \alpha_k(G_2).\]
\end{theorem}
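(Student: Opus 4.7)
The cleanest route is to package everything through Corollary~\ref{cor:strongproductrelation}, which already reduces a $k$-independence statement about $G_1 \boxtimes G_2$ to an ordinary independence statement about $G_1^k \boxtimes G_2^k$. Applying that identity gives
\[\alpha_k(G_1 \boxtimes G_2) = \alpha(G_1^k \boxtimes G_2^k),\]
so it suffices to prove the classical strong-product lower bound $\alpha(H_1 \boxtimes H_2) \geq \alpha(H_1)\,\alpha(H_2)$ for the auxiliary graphs $H_i = G_i^k$, and then translate the right-hand side back via $\alpha(G_i^k) = \alpha_k(G_i)$. This is exactly the content of the extension of \cite[Theorem~2.6]{Jha1994IndependenceGraphs} alluded to in the text.

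To prove the classical inequality in a self-contained way, I would pick independent sets $I_i \subseteq V(H_i)$ of sizes $\alpha(H_i)$ and show that $I_1 \times I_2$ is independent in $H_1 \boxtimes H_2$. Indeed, two distinct vertices $(u_1,u_2), (v_1,v_2) \in I_1 \times I_2$ are adjacent in the strong product only if, for each coordinate $i$, either $u_i = v_i$ or $(u_i,v_i) \in E(H_i)$. Since at least one coordinate must differ and $I_i$ is independent, the corresponding coordinate cannot be an edge, ruling out adjacency.

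Alternatively, one can avoid invoking Corollary~\ref{cor:strongproductrelation} and prove the inequality directly via Proposition~\ref{prop:strongproductgraphdistance}: take $S_i \subseteq V(G_i)$ to be maximum $k$-independent sets and consider $S_1 \times S_2$. For any two distinct $(u_1,u_2), (v_1,v_2) \in S_1 \times S_2$, at least one coordinate differs, say the first, giving $\delta_{G_1}(u_1, v_1) > k$; the other coordinate only makes the distance larger, so by Proposition~\ref{prop:strongproductgraphdistance}
\[\delta_{G_1 \boxtimes G_2}\bigl((u_1,u_2),(v_1,v_2)\bigr) = \max\bigl(\delta_{G_1}(u_1,v_1), \delta_{G_2}(u_2,v_2)\bigr) > k,\]
and $S_1 \times S_2$ is $k$-independent of size $\alpha_k(G_1)\,\alpha_k(G_2)$.

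There is no real obstacle here: the bulk of the work was done in Lemma~\ref{lem:strongproductpowerequivalence}, and the remaining step is the standard product construction of an independent set. The only mild subtlety is remembering to handle the case where one of the two coordinates coincides, which the $\max$ in Proposition~\ref{prop:strongproductgraphdistance} absorbs painlessly.
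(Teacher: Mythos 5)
Your first route is exactly the paper's proof: it derives the bound from Corollary~\ref{cor:strongproductrelation} together with \cite[Theorem~2.6]{Jha1994IndependenceGraphs} applied to $G_1^k$ and $G_2^k$, and your self-contained verification of the classical inequality and of the direct distance argument via Proposition~\ref{prop:strongproductgraphdistance} are both correct (the latter mirrors the paper's own proof of Theorem~\ref{thm:product_graphs_cartesian_bounds}$(i)$ for the Cartesian product). No gaps.
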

\begin{proof}
	It follows directly from Corollary~\ref{cor:strongproductrelation} and applying~\cite[Theorem~2.6]{Jha1994IndependenceGraphs} to the power graphs $G_1^k$ and $G_2^k$.
\end{proof}

The relation given by Lemma~\ref{lem:strongproductpowerequivalence} does not extend to the other graph products considered in this paper. For $k = 2$, take for instance  $G_1$ to be a complete graph $K_2$, and $G_2$ to be a path $P_4$; then $G_1^k \square G_2^k \neq (G_1 \square G_2)^k$, $G_1^k \times G_2^k \neq (G_1 \times G_2)^k$, and $G_1^k \cdot G_2^k \neq (G_1 \cdot G_2)^k$.

%%%%%%%%%%%%%%%%%%%%%%%%%%%%%%%%%%%%%%%%%%
\section{Cartesian product}\label{sec:cartesianproduct}
%%%%%%%%%%%%%%%%%%%%%%%%%%%%%%%%%%%%%%%%%%

Vizing~\cite{Vizing1963CartesianGraphs} obtained the following celebrated bounds on $\alpha(G_1\square G_2)$:

\begin{theorem}\cite{Vizing1963CartesianGraphs}\label{thm:Vizing}
	For any two graphs $G_1, G_2$,
	\begin{description}
		\item[$(i)$] $\alpha(G_1 \square G_2) \geq \alpha(G_1) \cdot \alpha(G_2) + \min(|V(G_1)| - \alpha(G_1), |V(G_2)| - \alpha(G_2)),$
		\item[$(ii)$] $\alpha(G_1 \square G_2) \leq \min(\alpha(G_1) \cdot |V(G_2)|, \alpha(G_2) \cdot |V(G_1)|).$
	\end{description}
\end{theorem}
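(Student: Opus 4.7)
The plan is to treat the two inequalities separately, since they are quite different in flavour. I would start with the upper bound $(ii)$, which is essentially a ``slice'' argument. For each fixed $v \in V(G_2)$, the set $V(G_1) \times \{v\}$ induces in $G_1 \square G_2$ a subgraph isomorphic to $G_1$, directly from the definition of $E(G_1 \square G_2)$ in the introduction. Any independent set $S$ of $G_1 \square G_2$ therefore meets each of these $|V(G_2)|$ slices in an independent set of $G_1$, of size at most $\alpha(G_1)$. Summing gives $|S| \le \alpha(G_1) \cdot |V(G_2)|$, and exchanging the roles of $G_1$ and $G_2$ yields the other half of the minimum.

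For the lower bound $(i)$, the plan is to exhibit a sufficiently large independent set explicitly. Without loss of generality assume $|V(G_1)| - \alpha(G_1) \le |V(G_2)| - \alpha(G_2)$, so the claimed additive term equals $|B_1|$ where $B_i := V(G_i) \setminus I_i$ and $I_i$ is a fixed maximum independent set of $G_i$. I would pick any injection $f : B_1 \to B_2$ (which exists since $|B_1| \le |B_2|$) and consider the candidate
\[
S = (I_1 \times I_2) \cup \{(u, f(u)) : u \in B_1\}.
\]
Because $I_1 \times I_2$ and $\{(u,f(u)) : u \in B_1\}$ are disjoint (their first coordinates lie in disjoint sets $I_1$ and $B_1$), the cardinality of $S$ is exactly $\alpha(G_1)\alpha(G_2) + |B_1|$, matching the bound.

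The only thing left, and the step I expect to be the main (though not difficult) obstacle, is checking that $S$ is independent in $G_1 \square G_2$. By the definition of the Cartesian product, an edge of $S$ would have to come from two vertices sharing one coordinate and being adjacent in the other. I would rule this out by a short case analysis: two vertices of $I_1 \times I_2$ cannot be adjacent since $I_1$ and $I_2$ are independent; two vertices $(u, f(u))$ and $(u', f(u'))$ with $u \ne u'$ cannot share a coordinate because $f$ is injective; and a ``mixed'' pair with one endpoint $(a, b) \in I_1 \times I_2$ and the other $(u, f(u))$ cannot share a first coordinate (as $I_1 \cap B_1 = \emptyset$) nor a second coordinate (as $I_2 \cap B_2 = \emptyset$). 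The real content of the argument is the construction itself; the verification is then a bookkeeping exercise.
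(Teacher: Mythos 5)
Your proposal is correct, and it is worth noting that the paper itself gives no proof of this statement: Theorem~\ref{thm:Vizing} is quoted from Vizing's 1963 paper, so you have supplied an argument where the authors rely on a citation. Both halves of your argument check out. For $(ii)$, your slice argument is essentially the same idea the paper uses to prove the $k$-generalization in Theorem~\ref{thm:product_graphs_cartesian_bounds}$(ii)$: there the authors delete all edges of $G_2$ and observe that the product splits into $|V(G_2)|$ pairwise anticomplete copies of $G_1$, which is your slicing packaged differently. For $(i)$, your set $S=(I_1\times I_2)\cup\{(u,f(u)) : u\in B_1\}$ is the standard construction for Vizing's lower bound, and your case analysis is complete; the key point, which you use correctly, is that any edge of the Cartesian product must agree in exactly one coordinate, so the diagonal vertices $(u,f(u))$, which differ from one another and from every vertex of $I_1\times I_2$ in both coordinates, can never be endpoints of an edge of $S$. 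One remark that connects your proof to the paper: this diagonal trick is precisely the part that fails for $k\geq 2$, since $k$-independence requires distance greater than $k$ and, by Proposition~\ref{prop:cartesianproductgraphdistance}, $\delta_{G_1\square G_2}((u,f(u)),(u',f(u')))=\delta_{G_1}(u,u')+\delta_{G_2}(f(u),f(u'))$ can be at most $k$ even when the two vertices disagree in both coordinates. This explains why the paper's $k$-analogue, Theorem~\ref{thm:product_graphs_cartesian_bounds}$(i)$, retains only the product term $\alpha_k(G_1)\cdot\alpha_k(G_2)$ and drops the additive term.
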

It is easy to see that if both $G_1$ and $G_2$ are complete graphs, then Theorem~\ref{thm:Vizing} yields the exact value of $\alpha(G_1\square G_2)$. However, in general, there is a gap between the two bounds. 

In this section we will extend Vizing's bounds to the $k$-independence number. To that purpose we will use the relation between distances in graphs and distances in their graph products. Recall that $\delta_G(v_i, v_j)$ denotes the distance between two vertices $v_i, v_j$ in a graph $G$.

\begin{proposition} 
	\label{prop:cartesianproductgraphdistance}
	{\cite[Proposition~5.1]{handbook}}
	For any two graphs $G_1, G_2$, it holds for any pair of vertices $(u_1, u_2), (v_1, v_2) \in V(G_1) \times V(G_2)$ in the product graph $G_1 \square G_2$ that 
	\[\delta_{G_1 \square G_2} ((u_1, u_2), (v_1, v_2)) = \delta_{G_1}(u_1, v_1) + \delta_{G_2}(u_2, v_2).\]
\end{proposition}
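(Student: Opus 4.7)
The plan is to prove the equality by establishing both inequalities separately, using the description of edges in the Cartesian product: each edge of $G_1 \square G_2$ changes exactly one coordinate and corresponds to a single edge in either $G_1$ or $G_2$.

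For the upper bound, I would concatenate shortest paths coordinate by coordinate. Let $p = \delta_{G_1}(u_1, v_1)$ and $q = \delta_{G_2}(u_2, v_2)$, and fix shortest paths $u_1 = w_0, w_1, \ldots, w_p = v_1$ in $G_1$ and $u_2 = x_0, x_1, \ldots, x_q = v_2$ in $G_2$. Then the sequence
\[(w_0, x_0), (w_1, x_0), \ldots, (w_p, x_0), (w_p, x_1), \ldots, (w_p, x_q)\]
is a walk in $G_1 \square G_2$ of length $p+q$ from $(u_1, u_2)$ to $(v_1, v_2)$, because successive pairs differ in exactly one coordinate by an edge of the appropriate factor. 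This yields $\delta_{G_1 \square G_2}((u_1, u_2), (v_1, v_2)) \le \delta_{G_1}(u_1, v_1) + \delta_{G_2}(u_2, v_2)$.

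For the lower bound, I would take an arbitrary shortest walk in $G_1 \square G_2$ from $(u_1, u_2)$ to $(v_1, v_2)$ and project it onto each factor. Specifically, let $\ell = \delta_{G_1 \square G_2}((u_1, u_2), (v_1, v_2))$, and consider a walk $(a_0, b_0), (a_1, b_1), \ldots, (a_\ell, b_\ell)$ realizing this distance. By the edge rule of the Cartesian product, exactly one of $a_{i-1} = a_i$ or $b_{i-1} = b_i$ holds at every step, and the non-equal coordinate gives an edge in the corresponding factor. Collapsing consecutive repetitions in the first coordinate yields a walk in $G_1$ from $u_1$ to $v_1$ whose length is the number of steps in which the first coordinate changes; similarly for the second coordinate. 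Since each step contributes to exactly one of these two counts, they sum to $\ell$, and each is bounded below by $\delta_{G_1}(u_1, v_1)$ and $\delta_{G_2}(u_2, v_2)$ respectively. Hence $\ell \ge \delta_{G_1}(u_1, v_1) + \delta_{G_2}(u_2, v_2)$.

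The two bounds together give the claim. The only subtle point, and the most likely place to slip up, is in the lower-bound argument: one must invoke that in the Cartesian product an edge changes exactly one coordinate (as opposed to the strong or tensor product), so that the decomposition of the walk length into the two projected walk lengths is a clean partition. Everything else is routine bookkeeping.
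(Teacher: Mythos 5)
Your proof is correct: the upper bound via concatenating the two shortest paths and the lower bound via projecting a shortest walk onto the factors (using that each Cartesian-product edge changes exactly one coordinate) together give the equality, and you correctly flag the one-coordinate-per-edge property as the crux of the lower bound. Note that the paper itself offers no proof here --- it cites the result as Proposition~5.1 of the Handbook of Product Graphs --- and your argument is precisely the standard proof of that fact, so there is nothing to reconcile; the only cosmetic omission is the degenerate case of vertices in different components, where both sides are $\infty$ and your projection argument still applies.
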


The following two results extend Vizing's lower and upper bounds from Theorem~\ref{thm:Vizing}.

\begin{theorem}    \label{thm:product_graphs_cartesian_bounds}
	For any two graphs $G_1, G_2$,
	\begin{description}
		\item[$(i)$] $\alpha_k(G_1 \square G_2) \geq \alpha_k(G_1) \cdot \alpha_k(G_2),$
		\item[$(ii)$] $\alpha_k(G_1 \square G_2) \leq \min(\alpha_k(G_1) \cdot |V(G_2)|, \alpha_k(G_2) \cdot |V(G_1)|).$
	\end{description}
\end{theorem}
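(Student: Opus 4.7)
The plan is to deduce both parts from Proposition~\ref{prop:cartesianproductgraphdistance} by lifting a $k$-independent set from the factors to the product (for $(i)$), and by projecting a $k$-independent set of the product onto a fiber (for $(ii)$).

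For part $(i)$, I would start with maximum $k$-independent sets $S_1 \subseteq V(G_1)$ and $S_2 \subseteq V(G_2)$, so that $|S_1| = \alpha_k(G_1)$ and $|S_2| = \alpha_k(G_2)$. The natural candidate set in the product is $S := S_1 \times S_2$, which has size $\alpha_k(G_1) \cdot \alpha_k(G_2)$. I would verify that $S$ is $k$-independent in $G_1 \square G_2$ by a case split on any two distinct $(u_1,u_2), (v_1,v_2) \in S$: by Proposition~\ref{prop:cartesianproductgraphdistance} their distance is $\delta_{G_1}(u_1,v_1) + \delta_{G_2}(u_2,v_2)$. If $u_1 \neq v_1$, then $k$-independence of $S_1$ already forces $\delta_{G_1}(u_1,v_1) > k$, so the sum exceeds $k$; the case $u_2 \neq v_2$ is symmetric; and since the pairs are distinct at least one coordinate differs. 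Hence $S$ is $k$-independent, giving the bound.

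For part $(ii)$, I would take a maximum $k$-independent set $S$ in $G_1 \square G_2$ and slice it along $G_1$: for each $v \in V(G_1)$, define the fiber $S_v := \{u \in V(G_2) : (v,u) \in S\}$. The key observation is that $S_v$ is $k$-independent in $G_2$, because for any two distinct $u, u' \in S_v$, Proposition~\ref{prop:cartesianproductgraphdistance} gives
\[
\delta_{G_1 \square G_2}((v,u),(v,u')) = \delta_{G_1}(v,v) + \delta_{G_2}(u,u') = \delta_{G_2}(u,u'),
\]
which must exceed $k$ since $(v,u),(v,u') \in S$. Thus $|S_v| \leq \alpha_k(G_2)$, and summing over $v \in V(G_1)$ yields $|S| \leq |V(G_1)| \cdot \alpha_k(G_2)$. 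Slicing symmetrically along $G_2$ gives $|S| \leq |V(G_2)| \cdot \alpha_k(G_1)$, and together they produce the minimum in the statement.

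Neither half looks genuinely hard once Proposition~\ref{prop:cartesianproductgraphdistance} is in hand; the only mild subtlety is the case distinction in $(i)$ ensuring that when exactly one coordinate is shared, the nonzero term is already strictly greater than $k$ (and therefore no cancellation with a potentially small distance in the other coordinate is needed). This is what makes the Cartesian-product argument for $\alpha_k$ strictly simpler than Vizing's original argument for $\alpha$: the additive distance formula lets us ignore the $\min(|V(G_i)|-\alpha(G_i))$ term that appears in Theorem~\ref{thm:Vizing}$(i)$.
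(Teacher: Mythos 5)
Your proposal is correct in both parts. Part $(i)$ is essentially identical to the paper's argument: both form $S_1 \times S_2$ from maximum $k$-independent sets in the factors and use the additive distance formula of Proposition~\ref{prop:cartesianproductgraphdistance}, noting that distinctness forces one coordinate to differ, whence the corresponding summand alone already exceeds $k$.

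For part $(ii)$ you take a genuinely different route. The paper replaces $G_2$ by the edgeless graph $G_2'$ on the same vertex set, observes that removing edges cannot decrease $\alpha_k$, and then exploits the fact that $G_1 \square G_2'$ decomposes into $|V(G_2)|$ pairwise anticomplete copies of $G_1$, so that $\alpha_k(G_1 \square G_2') = \alpha_k(G_1)\cdot|V(G_2)|$. You instead slice a maximum $k$-independent set $S$ of the product into fibers $S_v$ and bound each fiber by $\alpha_k$ of the other factor, summing over $v$. Your argument is more direct in that it needs neither the monotonicity of $\alpha_k$ under edge deletion nor the disjoint-union decomposition, and it reuses Proposition~\ref{prop:cartesianproductgraphdistance} uniformly across both parts. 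One point worth appreciating: the induced subgraph on a fiber $\{v\} \times V(G_2)$ is isomorphic to $G_2$, but a priori paths in the product could leave the fiber and shorten distances; your invocation of the distance formula (which gives $\delta_{G_1 \square G_2}((v,u),(v,u')) = \delta_{G_2}(u,u')$ exactly, since $\delta_{G_1}(v,v)=0$) is precisely what rules this out, and this is the step the paper's edge-deletion approach sidesteps by making the fibers honestly anticomplete before counting. One small caveat on your closing remark: for $k=1$ your (and the paper's) lower bound in $(i)$ is strictly weaker than Vizing's Theorem~\ref{thm:Vizing}$(i)$, which includes the extra $\min(|V(G_1)|-\alpha(G_1), |V(G_2)|-\alpha(G_2))$ term; the additive distance formula does not so much let you ``ignore'' that term as fail to recover it, so the simplicity comes at a (mild) cost in sharpness at $k=1$.
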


\begin{proof}
	\begin{description}
		\item[$(i)$]   
		Let the graphs $G_1$ and $G_2$ and $k \in \mathbb{N}$ be given. Let $S_1 \subseteq V(G_1)$ be a set of vertices in graph $G_1$ such that $|S_1| = \alpha_k(G_1)$ and $S_1$ is $k$-independent in the graph $G_1$. Similarly, let $S_2 \subseteq V(G_2)$ be a set of vertices in graph $G_2$ such that $|S_2| = \alpha_k(G_2)$ and $S_2$ is $k$-independent in the graph $G_2$. We claim that the set of vertices $S = S_1 \times S_2$ is $k$-independent in the product graph $G_1 \square G_2$. 
		
		Let $(u_1, u_2), (v_1, v_2) \in S$ be two distinct vertices in the product graph $G_1 \square G_2$. By Proposition~\ref{prop:cartesianproductgraphdistance}, 
		\[\delta_{G_1 \square G_2} ((u_1, u_2), (v_1, v_2)) = \delta_{G_1}(u_1, v_1) + \delta_{G_2}(u_2, v_2).\]
		Because the vertices $(u_1, u_2)$ and $(v_1, v_2)$ are distinct, either $u_1 \neq v_1$ or $u_2 \neq v_2$ must hold. Without loss of generality, assume that $u_1 \neq v_1$. In that case, it follows that $\delta_{G_1 \square G_2} ((u_1, u_2), (v_1, v_2)) \geq \delta_{G_1}(u_1, v_1)$. As $u_1, v_1 \in S_1$ and because set $S_1$ is $k$-independent in graph $G_1$, it holds that $\delta_{G_1}(u_1, v_1) > k$. Therefore it follows that $$\delta_{G_1 \square G_2} ((u_1, u_2), (v_1, v_2)) > k.$$But as vertices $(u_1, u_2), (v_1, v_2) \in S$ were selected arbitrarily, we conclude that the set $S$ is $k$-independent in the product graph $G_1 \square G_2$. As $|S| = |S_1| \cdot |S_2| = \alpha_k(G_1) \cdot \alpha_k(G_2)$, we then conclude that $$\alpha_k(G_1 \square G_2) \geq \alpha_k(G_1) \cdot \alpha_k(G_2).$$ 
		\item[$(ii)$] 
		Consider two graphs $G_1$ and $G_2$ and $k \in \mathbb{N}$. Let $S \subseteq V(G_1)$ be a set of vertices in graph $G_1$ such that $|S| = \alpha_k(G_1)$ and set $S$ $k$-independent. Let $G_2'$ be the edgeless graph on the vertex set of graph $G_2$, that is, $V(G_2') = V(G_2)$ and $E(G_2') = \emptyset$. Then, as removing edges from a graph cannot result in a decrease of the $k$-independence number, by the definition of the Cartesian graph product, $\alpha_k(G_1 \square G_2) \leq \alpha_k(G_1 \square G_2')$.
		
		In the graph product $G_1 \square G_2'$, due to the non-existence of edges in $G_2'$ and the definition of the Cartesian graph product, we know that the sets $S_{v_2} = \{(v_1, v_2) | v_1 \in V(G_1)\}$ for vertex ${v_2 \in V(G_2)}$ are anticomplete. Let the graph $G_{v_2}$ be the subgraph induced by set $S_{v_2}$ in product graph $G_1 \square G_2'$ for \linebreak${v_2 \in V(G_2)}$. Then, as the sets $S_{v_2}$ for all vertices $v_2 \in V(G_2)$ are anticomplete, $\alpha_k(G_1 \square G_2') = \sum_{v_2 \in V_2} \alpha_k(G_{v_2})$. Moreover, as $((x, v_2), (y, v_2)) \in E(G_1 \square G_2)$ if and only if $(x, y) \in E(G_1)$, $\alpha_k(G_{v_2}) = \alpha_k{G_1}$ for all vertices $v_2 \in V(G_2)$. Therefore, $\alpha_k(G_1 \square G_2') = \alpha_k(G_1) \cdot |V_2|$.
		
		Thus, $\alpha_k(G_1 \square G_2) \leq \alpha_k(G_1 \square G_2') = \alpha_k(G_1) \cdot |V_2|$. Analogously, it follows that $\alpha_k(G_1 \square G_2) \leq \alpha_k(G_2) \cdot |V_1|$.
		\qedhere
	\end{description}
\end{proof}

While Theorem~\ref{thm:product_graphs_cartesian_bounds}$(ii)$ is tight for complete graphs when $k=1$~\cite{Vizing1963CartesianGraphs}, this is not the case for $k > 1$. Indeed, take for example $G_1 = G_2 = K_2$, and $k = 2$. Then $\alpha_k(G_1 \square G_2) = 1$, while $\alpha_k(G_1) \cdot |V_2| = \alpha_k(G_2) \cdot |V_1| = 2$. On the other hand, Theorem~\ref{thm:product_graphs_cartesian_bounds}$(i)$ is tight for $G_1, G_2$ being complete graphs and $k > 1$, as the distance between any pair of vertices in the graph product $G_1 \square G_2$ is then at most $2$. Thus, for $k > 1$, $\alpha_k(G_1 \square G_2) = 1 = 1 \cdot 1 = \alpha_k(G_1) \cdot \alpha_k(G_2)$. Observe that if the graph $G_1$ or $G_2$ are edgeless, then the upper and lower bound coincide and are thus tight.

For $k=1$, Theorem~\ref{thm:product_graphs_cartesian_bounds} yields Vizing's bounds from Theorem~\ref{thm:Vizing} (see~\cite{Vizing1963CartesianGraphs} for more details) and Jha and Slutzki~[\cite{Jha1994IndependenceGraphs}, Corollary~2.5]. Next, we investigate other tight cases of Theorem~\ref{thm:product_graphs_cartesian_bounds}. 

\begin{remark}
	\begin{description}
		\item[$(i)$] Theorem~\ref{thm:product_graphs_cartesian_bounds}$(i)$ is tight for all even $k \in \mathbb{N}$ for graphs $G_1$ and $G_2$ both isomorphic to the path  $P_{k+2}$.
		\item[$(ii)$] Theorem~\ref{thm:product_graphs_cartesian_bounds}$(i)$ is tight for two graphs $G_1$ and $G_2$ if and only if there exists a maximum $k$-independent set $S$ in the graph product  $G_1\square G_2$ such that for all the vertices $(u_1, v_1)$ and $(u_2, v_2)$ in set $S$, it holds that vertices $(u_1, v_2)$ and $(u_2, v_1)$ are contained in the set $S$ as well.
		\item[$(iii)$] Theorem~\ref{thm:product_graphs_cartesian_bounds}$(ii)$ is tight for all $k \in \mathbb{N}^+$, if the graph $G_1$ is $K_2$, and $G_2$ is $C_{2k+1}$.
	\end{description}
\end{remark}
\begin{proof}
	\begin{description}
		\item[$(i)$] Let the vertices of the paths $G_1$ and $G_2$ be labelled by $u_1, u_2, \ldots , u_{k+2}$ and $v_1, v_2, \ldots , v_{k+2}$, respectively, and such that $(u_i, u_{i+1}) \in E(G_1)$ and $(v_i, v_{i+1}) \in E(G_2)$ for $i = 1, 2, \ldots , k+1$. Trivially, $\alpha_k(G_1) = \alpha_k(G_2) = 2$. Thus, it suffices to show that $\alpha_k(G_1 \square G_2) \leq 4$. Let $S$ be a maximum $k$-independent set of vertices in $G_1 \square G_2$. We aim to show that $|S| \leq 4$. To that purpose, we consider a partitioning of the vertex set $V(G_1 \square G_2)$ given by the sets 
		\begin{align*}
		V_1 &= \{(u_i, v_j) \, | \, 1 \leq i \leq \frac{k}{2}+1, \, 1 \leq j \leq \frac{k}{2}+1\},\\
		V_2 &= \{(u_i, v_j) \, | \, 1 \leq i \leq \frac{k}{2}+1, \, \frac{k}{2}+2 \leq j \leq k+2\},\\
		V_3 &= \{(u_i, v_j) \, | \, \frac{k}{2}+2 \leq i \leq k+2, \, 1 \leq j \leq \frac{k+2}{2}\},\\
		V_4 &= \{(u_i, v_j) \, | \, \frac{k}{2}+2 \leq i \leq k+2, \, \frac{k}{2}+2 \leq j \leq k+2\}.
		\end{align*}
		We note that as $k$ is even, the partitioning is well-defined. Next, we observe that for any pair of vertices $(u_{i_1}, v_{j_1}), (u_{i_2}, v_{j_2}) \in V_1$, by Proposition~\ref{prop:cartesianproductgraphdistance}, it follows that 
		\begin{align*}
		\delta_{G_1 \square G_2}((u_{i_1}, v_{j_1}), (u_{i_2}, v_{j_2})) &= \delta_{G_1}(u_{i_1}, u_{i_2}) + \delta_{G_2}(v_{j_1}, v_{j_2})\\
		&= |i_2 - i_1| + |j_2 - j_1|\\
		&\leq \frac{k}{2} + \frac{k}{2}\\
		&= k.
		\end{align*}
		Thus, a maximum $k$-independent set $S$ contains at most one vertex in set $V_1$. Analogously, it follows that $S$ contains at most one vertex of each of the sets $V_2$, $V_3$, and $V_4$. Then, as the sets $V_1$, $V_2$, $V_3$, and $V_4$ partition the vertex set $V(G_1 \square G_2)$, we find that $S$ contains at most four vertices, as desired.		
				
		\item[$(ii)$] Let $S_1$ and $S_2$ be maximum $k$-independent sets in $G_1$ and $G_2$, respectively. As shown in the proof of Theorem~\ref{thm:product_graphs_cartesian_bounds}$(i)$, the set $S = S_1 \times S_2$ is a $k$-independent set in the graph product $G_1 \square G_2$. If the bound is tight for $G_1$ and $G_2$, the set $S$ must be a maximum $k$-independent set in $G_1 \square G_2$. Because $S$ is the Cartesian product of two sets, it trivially holds that for all vertices $(u_1, v_1)$ and $(u_2, v_2)$ in  $S$, that vertices $(u_1, v_2)$ and $(u_2, v_1)$ are in $S$ as well.
		
		Next consider a set $S\subseteq V(G_1) \times V(G_2)$ such that $S$ is a maximum $k$-independent set in the product $G_1 \square G_2$ such that for all vertices $(u_1, v_1)$ and $(u_2, v_2)$ in set $S$, vertices $(u_1, v_2)$ and $(u_2, v_1)$ are elements of set $S$ as well. Then, clearly, there must exist sets $S_1 \subseteq V_1$ and $S_2 \subseteq V_2$ such that $S_1 \times S_2 = S$. But then, by reversing the argument in the proof of Theorem~\ref{thm:product_graphs_cartesian_bounds}$(i)$, $S_1$ and $S_2$ must be $k$-independent in graphs $G_1$ and $G_2$ respectively. Hence, as $|S_1| \leq \alpha_k(G_1)$ and $|S_2| \leq \alpha_k(G_2)$, the bound must be attained tightly.
		
		Thus both directions of the bi-implication hold.
		
		\item[$(iii)$] It suffices to show that for all $k \in \mathbb{N}^+$ there exists a $k$-independent set $S \subseteq V(G)$ such that $|S| = \min(\alpha_k(K_2)\cdot |V(C_{2k+1})|, \alpha_k(C_{2k+1})\cdot |V(K_2)|)$. We observe that $\alpha_k(C_{2k+1}) = 1$, $\alpha_k(K_2) = 1$, $|V(C_{2k+1})| = 2k+1$, and $|V(K_2)|=2$. Hence, it suffices to show that for each $k \in \mathbb{N}^*$ there exists a $k$-independent set $S \subseteq V(G)$ such that $|S| = 2$.
		
		Let $V(K_2) = \{u_1, u_2\}$ and let $V(C_{2k+1}) = \{v_1, v_2, \ldots , v_{2k+1}\}$ such that \linebreak $E(C_{2k+1}) = \{(v_i,v_{i+1})\, | \, i \in [2k]\} \cup \{(v_1, v_{2k+1})\}$. Then we claim that $S = \{(u_1,v_1), (u_2, v_{k+1})\}$ is $k$-independent in the graph product $K_2 \square C_{2k+1}$. It suffices to show that $\delta_{K_2 \square C_{2k+1}}((u_1, v_1), (u_2, v_{k+1})) > k$. From Proposition~\ref{prop:cartesianproductgraphdistance}, it follows that
		\begin{align*}
		\delta_{K_2 \square C_{2k+1}}((u_1, v_1), (u_2, v_{k+1})) &= \delta_{K_2}(u_1, u_2) + \delta_{C_{2k+1}}(v_1, v_{k+1})\\
		&= 1 + k\\
		&> k,
		\end{align*}
		as desired.
		\qedhere
	\end{description}

\end{proof}

%%%%%%%%%%%%%%%%%%%%%%%%%%%%%%%%%%%%%%%%%%
\section{Tensor product}\label{sec:tensorproduct}
%%%%%%%%%%%%%%%%%%%%%%%%%%%%%%%%%%%%%%%%%%
We first note that the tensor graph product does not preserve the connectedness of the original graphs (take for instance the graph product $P_2 \times P_3$). Moreover, the distance between two vertices in the graph product can generally not be derived directly from the distances between the corresponding vertices in the graphs $G_1$ and $G_2$. 

\begin{proposition}
	\label{prop:tensorproductgraphdistance2}
	{\cite[Proposition~5.7]{handbook}}
	For any two graphs $G_1, G_2$, it holds that for any pair of vertices $(u_1, u_2), (v_1, v_2) \in V(G_1) \times V(G_2)$ in the graph product $G_1 \times G_2$ that $\delta_{G_1 \times G_2}((u_1, u_2), (v_1, v_2))$ equals the minimum number $\ell \in \mathbb{N}$ such that there exists both a walk between vertices $u_1$ and $v_1$ in graph $G_1$ of length $\ell$ and a walk between vertices $u_2$ and $v_2$ in graph $G_2$ of length $\ell$. If no such value $\ell \in \mathbb{N}$ exists, $\delta_{G_1 \times G_2}((u_1, u_2), (v_1, v_2)) = \infty$.
\end{proposition}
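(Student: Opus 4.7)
The plan is to prove the proposition by establishing a bijective correspondence between walks of length $\ell$ in the tensor product $G_1 \times G_2$ and pairs of equal-length walks in the factors. The proposition then follows since the distance equals the length of the shortest path, which coincides with the length of the shortest walk between the two vertices.

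First, I would show the forward direction: any walk $(w_0, w_1, \ldots, w_\ell)$ in $G_1 \times G_2$ from $(u_1, u_2)$ to $(v_1, v_2)$ projects coordinate-wise to walks of length $\ell$ in $G_1$ and $G_2$ respectively. Writing each $w_i = (w_i^{(1)}, w_i^{(2)})$, the definition of $E(G_1 \times G_2)$ guarantees that consecutive vertices in the walk satisfy $(w_i^{(1)}, w_{i+1}^{(1)}) \in E(G_1)$ and $(w_i^{(2)}, w_{i+1}^{(2)}) \in E(G_2)$. Thus $(w_0^{(1)}, \ldots, w_\ell^{(1)})$ is a walk from $u_1$ to $v_1$ in $G_1$ and $(w_0^{(2)}, \ldots, w_\ell^{(2)})$ is a walk from $u_2$ to $v_2$ in $G_2$, both of length exactly $\ell$.

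For the converse direction, given walks $(x_0, x_1, \ldots, x_\ell)$ from $u_1$ to $v_1$ in $G_1$ and $(y_0, y_1, \ldots, y_\ell)$ from $u_2$ to $v_2$ in $G_2$ of the same length, I would check that the sequence $((x_0, y_0), (x_1, y_1), \ldots, (x_\ell, y_\ell))$ forms a walk in $G_1 \times G_2$. This is immediate from the edge-set definition, since for each $i$ the pair $(x_i, x_{i+1}) \in E(G_1)$ and $(y_i, y_{i+1}) \in E(G_2)$ together imply $((x_i, y_i), (x_{i+1}, y_{i+1})) \in E(G_1 \times G_2)$.

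Combining the two directions gives that the existence of an $(u_1, u_2)$-to-$(v_1, v_2)$ walk of length $\ell$ in the tensor product is equivalent to the existence of paired walks of the common length $\ell$ in $G_1$ and $G_2$. Taking the minimum such $\ell$ (which equals the distance, since every path is a walk and every shortest walk is a path), the formula in the proposition follows; and if no such $\ell$ exists, the endpoints lie in distinct components of $G_1 \times G_2$, giving distance $\infty$ by convention. The only subtle point here (and the main obstacle worth noting) is that one must reason in terms of \emph{walks} rather than paths, because equal-length parity constraints imposed by the tensor product can force short walks that are not paths in the factors; but since walks and paths yield the same notion of distance in the product, this causes no difficulty.
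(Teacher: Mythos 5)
Your proof is correct and complete: the coordinate-projection argument (a walk of length $\ell$ in $G_1 \times G_2$ projects to walks of length $\ell$ in each factor, and conversely two equal-length walks in the factors zip together into a walk in the product), combined with the observation that the minimum walk length between two vertices equals the distance, is exactly the standard argument for this statement. Note that the paper itself offers no proof---it quotes the result as Proposition~5.7 of the cited handbook on product graphs---and your argument matches the known proof, correctly identifying the essential subtlety that one must reason with walks rather than paths, since the parity constraint in the tensor product can force the shortest connecting walks in the factors to repeat vertices.
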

\begin{corollary}
	\label{cor:tensorproductgraphdistance3}
	For any two graphs $G_1, G_2$, it holds for any pair of vertices \linebreak$(u_1, u_2), (v_1, v_2) \in V(G_1) \times V(G_2)$ in the graph product $G_1 \times G_2$ that \[\delta_{G_1 \times G_2}((u_1, u_2), (v_1, v_2)) \geq \max (\delta_{G_1}(u_1, v_1), \delta_{G_2}(u_2, v_2)).\]
\end{corollary}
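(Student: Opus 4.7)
The plan is to derive this directly from Proposition~\ref{prop:tensorproductgraphdistance2}, using only the basic fact that in any graph the length of a walk between two vertices is at least their distance. Concretely, I would fix vertices $(u_1, u_2), (v_1, v_2) \in V(G_1) \times V(G_2)$ and split into two cases according to whether $\delta_{G_1 \times G_2}((u_1, u_2), (v_1, v_2))$ is finite or infinite.

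If the distance is infinite, the stated inequality is vacuous, so assume it equals some $\ell \in \mathbb{N}$. By Proposition~\ref{prop:tensorproductgraphdistance2}, $\ell$ is the minimum natural number for which there exist a walk of length $\ell$ between $u_1$ and $v_1$ in $G_1$ and a walk of length $\ell$ between $u_2$ and $v_2$ in $G_2$. Since any walk between two vertices in a graph has length at least the distance between them, the existence of a length-$\ell$ walk in $G_1$ forces $\ell \geq \delta_{G_1}(u_1, v_1)$, and similarly $\ell \geq \delta_{G_2}(u_2, v_2)$. Taking the maximum of these two lower bounds yields the claim.

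There is essentially no obstacle here: the statement is a one-step unpacking of the preceding proposition combined with the definition of graph distance. The only mild subtlety is making sure the infinite case (which occurs, for instance, when the tensor product is disconnected across the two components containing the chosen vertices) is handled explicitly, and I would address this with a single sentence at the start of the argument as indicated above.
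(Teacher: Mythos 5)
Your proof is correct and matches the paper's intent exactly: the paper states this as an immediate corollary of Proposition~\ref{prop:tensorproductgraphdistance2} without spelling out a proof, and your argument (any walk between two vertices has length at least their distance, so the common walk length $\ell$ bounds both $\delta_{G_1}(u_1,v_1)$ and $\delta_{G_2}(u_2,v_2)$ from below, with the infinite case vacuous) is precisely that omitted one-step derivation.
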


The next result extends a lower bound by Jha and~Slutzki~\cite[Theorem~2.4]{Jha1994IndependenceGraphs}.

\begin{theorem}
	\label{thm:product_graphs_tensor_lower_bound}
	For all graphs $G_1, G_2$, \[\alpha_k(G_1 \times G_2) \geq \alpha_k(G_1) \cdot \alpha_k(G_2).\]
\end{theorem}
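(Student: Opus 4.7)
The plan is to mimic the proof of Theorem~\ref{thm:product_graphs_cartesian_bounds}$(i)$, exploiting the fact that Corollary~\ref{cor:tensorproductgraphdistance3} gives precisely the analogue of Proposition~\ref{prop:cartesianproductgraphdistance} needed for a lower bound argument. Concretely, I would take maximum $k$-independent sets $S_1 \subseteq V(G_1)$ and $S_2 \subseteq V(G_2)$ with $|S_1| = \alpha_k(G_1)$ and $|S_2| = \alpha_k(G_2)$, and then show that the product set $S = S_1 \times S_2 \subseteq V(G_1) \times V(G_2) = V(G_1 \times G_2)$ is $k$-independent in $G_1 \times G_2$.

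To verify $k$-independence of $S$, I would pick two arbitrary distinct vertices $(u_1, u_2), (v_1, v_2) \in S$. Since they are distinct, at least one coordinate differs; without loss of generality assume $u_1 \neq v_1$. Because $u_1, v_1 \in S_1$ and $S_1$ is $k$-independent in $G_1$, we get $\delta_{G_1}(u_1, v_1) > k$. Applying Corollary~\ref{cor:tensorproductgraphdistance3} then gives
\[
\delta_{G_1 \times G_2}((u_1, u_2), (v_1, v_2)) \geq \max(\delta_{G_1}(u_1, v_1), \delta_{G_2}(u_2, v_2)) \geq \delta_{G_1}(u_1, v_1) > k,
\]
so $(u_1, u_2)$ and $(v_1, v_2)$ are at distance greater than $k$ in the tensor product, as required. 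Note this argument is robust to the case $u_2 = v_2$: even if no walk of the relevant length exists in $G_2$ (so $\delta_{G_1 \times G_2}((u_1,u_2),(v_1,v_2)) = \infty$ in the convention of Proposition~\ref{prop:tensorproductgraphdistance2}), infinity is certainly greater than $k$.

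Having established $k$-independence of $S$, the conclusion follows by counting: $|S| = |S_1| \cdot |S_2| = \alpha_k(G_1) \cdot \alpha_k(G_2)$, hence $\alpha_k(G_1 \times G_2) \geq |S| = \alpha_k(G_1) \cdot \alpha_k(G_2)$. There is no real obstacle to overcome here; the only subtle point is that the tensor product can have infinite distances (disconnectedness), but this actually strengthens, rather than obstructs, the lower bound argument. The essential content is packaged inside Corollary~\ref{cor:tensorproductgraphdistance3}, which is why the proof parallels the Cartesian case so closely.
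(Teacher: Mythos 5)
Your proposal is correct and matches the paper's own proof essentially verbatim: both take $S = S_1 \times S_2$ for maximum $k$-independent sets $S_1, S_2$, assume without loss of generality that $u_1 \neq v_1$, and apply Corollary~\ref{cor:tensorproductgraphdistance3} to conclude $\delta_{G_1 \times G_2}((u_1,u_2),(v_1,v_2)) \geq \delta_{G_1}(u_1,v_1) > k$. Your added remark that infinite distances in a disconnected tensor product only help the bound is a fine observation, though the paper's argument (like yours) never needs to treat that case separately.
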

\begin{proof}
	Consider two graphs $G_1$ and $G_2$ and $k \in \mathbb{N}$. Let $S_1 \subseteq V(G_1)$ and $S_2 \subseteq V(G_2)$ be sets of vertices such that $|S_1| = \alpha_k(G_1)$ and $|S_2| = \alpha_k(G_2)$, and sets $S_1$ and $S_2$ be $k$-independent in $G_1$ and $G_2$ respectively. Let $S \subseteq V(G_1) \times V(G_2)$ be the vertex set given by $S_1 \times S_2$.
	
	We will show that $S$ is $k$-independent in $G_1 \times G_2$. For two distinct vertices \linebreak $(u_1, u_2), (v_1, v_2) \in S$, we consider the distance $\delta_{G_1 \times G_2}((u_1, u_2), (v_1, v_2))$. Because vertices $(u_1, u_2)$ and $(v_1, v_2)$ are distinct, either $u_1 \neq v_1$ or $u_2 \neq v_2$ must hold. Without loss of generality, assume that $u_1 \neq v_1$. Then, because by Corollary~\ref{cor:tensorproductgraphdistance3}, $$\delta_{G_1 \times G_2} ((u_1, u_2), (v_1, v_2)) \geq \delta_{G_1} (u_1, v_1),$$ and because $u_1, v_1 \in S_1$ and set $S_1$ is $k$\nobreakdash-independent in graph $G_1$, $$\delta_{G_1 \times G_2} ((u_1, u_2), (v_1, v_2)) \geq \delta_{G_1} (u_1, v_1) > k.$$ Thus, $S$ is $k$\nobreakdash-independent in the product graph $G_1 \times G_2$. Since $|S| = \alpha_k(G_1) \cdot \alpha_k(G_2)$, we conclude that $\alpha_k(G_1 \times G_2) \geq \alpha_k(G_1) \cdot \alpha_k(G_2)$, as desired.
\end{proof}

We note that for $k=1$, the lower bound by Jha and Slutzki~\cite[Theorem~2.4]{Jha1994IndependenceGraphs} generally gives a better bound than Theorem~\ref{thm:product_graphs_tensor_lower_bound}. Next, we present a tight case of Theorem~\ref{thm:product_graphs_tensor_lower_bound}.

\begin{remark}
	Theorem~\ref{thm:product_graphs_tensor_lower_bound} is tight for all even $k \in \mathbb{N}$ and for $G_1$ and $G_2$ both being $P_{k+2}$.
\end{remark}
\begin{proof}
	Consider the paths $G_1$ and $G_2$ and label their vertices as $u_1, u_2, \ldots , u_{k+2}$ and $v_1, v_2, \ldots , v_{k+2}$ respectively, such that $(u_i, u_{i+1}) \in E(G_1)$ and $(v_i, v_{i+1}) \in E(G_2)$ for $i = 1, 2, \ldots , k+1$. Trivially, $\alpha_k(G_1) = \alpha_k(G_2) = 2$. Thus, it suffices to show that $\alpha_k(G_1 \times G_2) \leq 4$.
	
	We first observe that between any pair of vertices $u_i, u_j \in V(G_1)$ there exists only a single path in $G_1$, and analogously for any pair of vertices $v_i, v_j \in V(G_2)$ in $G_2$. Hence, by Proposition~\ref{prop:tensorproductgraphdistance2}, for any pair of vertices $(u_i, v_j), (u_{i'}, v_{j'}) \in V(G_1 \times G_2)$ in the product graph $G_1 \times G_2$ with $i,i', j,j' \in [k+2]$, it holds that \linebreak $\delta_{G_1 \times G_2} ((u_i, v_j), (u_{i'}, v_{j'})) \leq \ell$ if and only if $\delta_{G_1}(u_i, u_{i'})$ and $\delta_{G_2}(v_j, v_{j'})$ are both at most $\ell$ and of the same parity. Moreover, by the structure of path graphs $G_1$ and $G_2$, it holds that $\delta_{G_1}(u_i, u_{i'}) = |i - i'|$ and $\delta_{G_2}(v_j, v_{j'}) = |j - j'|$.
	
	By considering the maximum distance $\ell = |V(G_1 \times G_2)| = (k+2)^2$, we then note that the product graph $G_1 \times G_2$ consists of two disjoint connected components $C_1$ and $C_2$, induced by the sets $S_1 = \{(u_i, v_j) \in V(G_1 \times G_2) \, | \, i \equiv j \pmod 2\}$ and $S_2 = \{(u_i, v_j) \in V(G_1 \times G_2) \, | \, i \not\equiv j \pmod 2\}$ respectively. We moreover note that because $k$ is even, and as $G_1$ and $G_2$ are path graphs of even lengths, the components $C_1$ and $C_2$ are isomorphic. It thus suffices to show that $\alpha_k(C_1) \leq 2$.
	
	We consider the set $S_1' = \{(u_i, v_j) \in S_1 \, | \, i \equiv 0 \pmod 2\}$. Let $(u_i, v_j), (u_{i'}, v_{j'})$ be a pair of vertices in $S_1'$. By the definition of $S_1$, it follows that $|i -i'|$ and $|j -j'|$ have the same parity, and hence $\delta_{G_1 \times G_2}((u_i, v_j), (u_{i'}, v_{j'})) = \max(|i-i'|, |j-j'|)$. Next, by the definition of $S_1'$, it follows that $i,i',j,j' \neq 1$. Hence, $i,i',j,j' \in [2, k+2]$, and thus $\delta_{G_1 \times G_2}((u_i, v_j), (u_{i'}, v_{j'})) \leq k$. Thus, as $S_1'$ does not contain two vertices at distance greater than $k$, we conclude that the $k$-independence number of the subgraph of component $C_1$ induced by the set $S_1'$ is at most $1$.
	
	Analogously, it follows that the $k$-independence number of the subgraph of component $C_1$ induced by the set $S_1 \setminus S_1'$ is also at most $1$. Then, as $V(C_1) = S_1$, we find that $\alpha_k(C_1) \leq 2$, as desired.
\end{proof}

%%%%%%%%%%%%%%%%%%%%%%%%%%%%%%%%%%%%%%%%%%
\section{Lexicographic product}\label{sec:lexicographicproduct}
%%%%%%%%%%%%%%%%%%%%%%%%%%%%%%%%%%%%%%%%%%
The lexicographic product differs from the other graph products previously discussed in that it is non-commutative. That is, for graphs $G_1$ and $G_2$ it does generally not hold that $G_1 \cdot G_2 = G_2 \cdot G_1$. Moreover, as shown by Geller and Stahl~\cite{GELLER197587}, the independence number of a lexicographic product graph $G_1 \cdot G_2$ can directly be computed from the independence numbers of the factors $G_1$ and $G_2$:

\begin{theorem}
	\label{thm:independence_number_lexicographic_product}
	{\cite[Theorem~1]{GELLER197587}}
	For all graphs $G_1, G_2$, $\alpha(G_1 \cdot G_2) = \alpha(G_1)\cdot \alpha(G_2)$.
\end{theorem}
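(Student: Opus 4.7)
The plan is to prove the identity by establishing the two inequalities $\alpha(G_1\cdot G_2)\geq \alpha(G_1)\cdot\alpha(G_2)$ and $\alpha(G_1\cdot G_2)\leq \alpha(G_1)\cdot\alpha(G_2)$ separately; the product decomposition structure of the lexicographic edge set makes both directions clean.

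For the lower bound, I would take maximum independent sets $A_1\subseteq V(G_1)$ and $A_2\subseteq V(G_2)$ with $|A_1|=\alpha(G_1)$ and $|A_2|=\alpha(G_2)$, and verify that $A_1\times A_2$ is independent in $G_1\cdot G_2$. Given two distinct vertices $(u_1,u_2),(v_1,v_2)\in A_1\times A_2$, the edge definition requires either $(u_1,v_1)\in E(G_1)$, which is ruled out by $A_1$ being independent in $G_1$, or $u_1=v_1$ together with $(u_2,v_2)\in E(G_2)$, which forces $u_2\neq v_2$ and is thus ruled out by $A_2$ being independent in $G_2$. Hence $|A_1\times A_2|=\alpha(G_1)\cdot \alpha(G_2)$ is a lower bound on $\alpha(G_1\cdot G_2)$.

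For the upper bound, let $S$ be a maximum independent set in $G_1\cdot G_2$. For each $v_1\in V(G_1)$, define the fiber $F_{v_1}=\{v_2\in V(G_2)\,|\,(v_1,v_2)\in S\}$. Since the subgraph of $G_1\cdot G_2$ induced on $\{v_1\}\times V(G_2)$ is isomorphic to $G_2$, the fiber $F_{v_1}$ must be independent in $G_2$, so $|F_{v_1}|\leq \alpha(G_2)$. Next, let $T=\{v_1\in V(G_1)\,|\,F_{v_1}\neq\emptyset\}$ be the projection of $S$ onto the first coordinate. If two distinct vertices $v_1,v_1'\in T$ were adjacent in $G_1$, then picking any $v_2\in F_{v_1}$ and $v_2'\in F_{v_1'}$ would yield two vertices $(v_1,v_2),(v_1',v_2')\in S$ that are adjacent in $G_1\cdot G_2$ by the first clause of the edge definition, contradicting the independence of $S$. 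Hence $T$ is independent in $G_1$ and $|T|\leq \alpha(G_1)$, yielding
\[|S|=\sum_{v_1\in T}|F_{v_1}|\leq |T|\cdot \alpha(G_2)\leq \alpha(G_1)\cdot\alpha(G_2).\]

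Combining the two bounds gives the desired equality. The only nontrivial step is recognising the fiber-shadow decomposition used in the upper bound; once this structural observation is in place, both halves reduce to direct applications of the edge definition of the lexicographic product.
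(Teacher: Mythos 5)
Your proof is correct: both the lower-bound verification that $A_1\times A_2$ is independent and the upper-bound fiber-and-projection argument ($|S|=\sum_{v_1\in T}|F_{v_1}|\leq |T|\cdot\alpha(G_2)\leq\alpha(G_1)\cdot\alpha(G_2)$) check out against the paper's edge definition of $G_1\cdot G_2$, including the minor subtleties (no loops, so the first clause also handles $u_1=v_1$; fibers induce copies of $G_2$). Note that the paper itself gives no proof of this statement --- it is quoted from Geller and Stahl --- and your argument is essentially the classical one for that result, so there is no divergence to report.
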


We note that while the lexicographic product is non-commutative, the order of the factors does not influence the independence number of the product graph. Moreover, we observe that it does not in general hold that $\alpha_k(G_1 \cdot G_2) = \alpha_k(G_1)\cdot\alpha_k(G_2)$. Consider for instance the case where $k = 2$, $G_1=K_2$, and $G_2=2K_1$. 

We investigate the $k$-independence number of lexicographic product graphs, and specifically the case where $k \geq 2$. To that aim, we need some preliminary results.

\begin{proposition}
	\label{prop:disjoint_union}
	Let $G$ be a graph and let $\mathcal{C}$ be a collection of graphs such that graph $G$ is the disjoint union of the graphs in $\mathcal{C}$. Then, for all $k \in \mathbb{N}^+$, $$\alpha_k(G) = \sum_{C \in \mathcal{C}} \alpha_k(C).$$
\end{proposition}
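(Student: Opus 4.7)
The plan is to prove the equality by establishing both inequalities, exploiting the fact that in a disjoint union, vertices in different components are at distance $\infty$, while vertices within the same component $C$ have the same distance in $G$ as they do in $C$.

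For the lower bound $\alpha_k(G) \geq \sum_{C \in \mathcal{C}} \alpha_k(C)$, I would for each component $C \in \mathcal{C}$ select a maximum $k$-independent set $S_C \subseteq V(C)$ of size $\alpha_k(C)$, and then set $S = \bigcup_{C \in \mathcal{C}} S_C$. I would verify that $S$ is $k$-independent in $G$ by taking two distinct vertices $u, v \in S$ and splitting into cases: if $u, v$ lie in the same component $C$, then $\delta_G(u,v) = \delta_C(u,v) > k$ by $k$-independence of $S_C$; if they lie in different components, then $\delta_G(u,v) = \infty > k$. Since the sets $S_C$ are pairwise disjoint (being subsets of disjoint vertex sets), the total size is exactly $\sum_{C \in \mathcal{C}} \alpha_k(C)$.

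For the upper bound $\alpha_k(G) \leq \sum_{C \in \mathcal{C}} \alpha_k(C)$, I would take a maximum $k$-independent set $S \subseteq V(G)$ and, for each $C \in \mathcal{C}$, define $S_C = S \cap V(C)$. I would argue that each $S_C$ is $k$-independent in $C$, again using that distances within a component agree between $G$ and $C$. Therefore $|S_C| \leq \alpha_k(C)$, and summing over $\mathcal{C}$ gives $\alpha_k(G) = |S| = \sum_{C \in \mathcal{C}} |S_C| \leq \sum_{C \in \mathcal{C}} \alpha_k(C)$.

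There is no real obstacle here; the only subtlety worth stating explicitly is the observation that $\delta_G(u,v) = \delta_C(u,v)$ whenever $u,v \in V(C)$ and that $\delta_G(u,v) = \infty$ whenever $u,v$ lie in distinct components, which is what makes both directions work cleanly. Combining the two inequalities yields the claimed equality.
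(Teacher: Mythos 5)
Your proof is correct; the paper in fact states this proposition without any proof, treating it as immediate, and your two-inequality argument (taking the union of maximum $k$-independent sets of the parts for one direction, and intersecting a maximum $k$-independent set of $G$ with each part for the other, both resting on the observations that $\delta_G(u,v)=\delta_C(u,v)$ for $u,v \in V(C)$ and $\delta_G(u,v)=\infty$ across distinct parts) is exactly the standard justification the authors leave implicit. One small wording caveat: the members of $\mathcal{C}$ need not be connected components of $G$, merely the parts of a disjoint union, but your argument goes through verbatim since no walk in $G$ can leave a part, so distances within a part are still preserved even when that part is itself disconnected.
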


%Proposition~\ref{prop:disjoint_union} follows directly from the definition of the $k$-independence number.

%Next, we consider the relation between distances in graphs $G_1$ and $G_2$, and in the corresponding product graph.

\begin{proposition}
	\label{prop:lexicographicproductgraphdistance}
	{\cite[Proposition~5.12]{handbook}}
	For any two graphs $G_1, G_2$, it holds for any pair of vertices $(u_1, u_2), (v_1, v_2) \in V(G_1) \times V(G_2)$ in the product graph $G_1 \cdot G_2$ that \[\delta_{G_1 \cdot G_2} ((u_1, u_2), (v_1, v_2)) = \begin{cases}
	\delta_{G_1}(u_1, v_1) & \text{ if } u_1 \neq v_1\\
	\delta_{G_2}(u_2, v_2) & \text{ if } u_1 = v_1 \text{ and } \deg_{G_1}(u_1) = 0\\
	\min(\delta_{G_2}(u_2, v_2), 2) & \text{ if } u_1 = v_1 \text{ and } \deg_{G_1}(u_1) > 0.
	\end{cases}\] 
\end{proposition}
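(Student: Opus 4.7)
The plan is to verify the distance formula by handling the three cases separately, in each case providing matching upper and lower bounds drawn directly from the edge-set description of $G_1 \cdot G_2$.

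For Case 1 ($u_1 \neq v_1$), I would obtain the upper bound $\delta_{G_1 \cdot G_2}((u_1,u_2),(v_1,v_2)) \leq \delta_{G_1}(u_1,v_1)$ by lifting any shortest $u_1$-to-$v_1$ path $u_1 = w_0, w_1, \ldots, w_\ell = v_1$ in $G_1$ to the walk $(u_1, u_2), (w_1, z_1), \ldots, (w_{\ell-1}, z_{\ell-1}), (v_1, v_2)$ with arbitrary $G_2$-coordinates $z_i$; each consecutive pair forms an edge of $G_1 \cdot G_2$ because the first coordinates are $G_1$-adjacent. The reverse inequality follows by projecting any walk in $G_1 \cdot G_2$ from $(u_1,u_2)$ to $(v_1,v_2)$ onto its first coordinate and then collapsing consecutive repeats: the result is a $u_1$-to-$v_1$ walk in $G_1$ of length no greater than the original, since every edge in the product either keeps the first coordinate fixed or traverses a $G_1$-edge.

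Case 2 ($u_1 = v_1$ with $\deg_{G_1}(u_1) = 0$) reduces to the observation that no edge of $G_1 \cdot G_2$ incident to the fiber $\{u_1\} \times V(G_2)$ can leave the fiber, since doing so would require a $G_1$-edge at $u_1$; hence the induced subgraph on this fiber is isomorphic to $G_2$, and the distance equals $\delta_{G_2}(u_2,v_2)$. For Case 3 ($u_1 = v_1$ with $\deg_{G_1}(u_1) > 0$), I would fix any $G_1$-neighbor $w$ of $u_1$ and exhibit the length-$2$ walk $(u_1, u_2), (w, v_2), (u_1, v_2)$, giving the upper bound $\leq 2$; shorter walks within the fiber handle the cases $u_2 = v_2$ and $u_2 \sim v_2$ in $G_2$. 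The matching lower bound uses that $(u_1,u_2) \sim (u_1,v_2)$ in $G_1 \cdot G_2$ forces $u_2 \sim v_2$ in $G_2$ (the alternative $u_1 \sim u_1$ is precluded in a simple graph), so distance $1$ forces $\delta_{G_2}(u_2,v_2) = 1$ and distance $0$ forces $u_2 = v_2$; together these give distance exactly $\min(\delta_{G_2}(u_2,v_2), 2)$.

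The argument is largely routine case analysis. The main subtlety I anticipate is the lower-bound step of Case 1, where the projection-to-first-coordinate must be followed by a collapse of consecutive repeats to convert a product walk into a genuine $G_1$-walk of at most the same length; the remainder is careful but direct bookkeeping against the edge definition of the lexicographic product.
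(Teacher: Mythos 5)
Your proof is correct, and all three cases are handled soundly: the lift-and-project argument (with collapsing of consecutive repeats) for $u_1 \neq v_1$, the fiber-isolation argument when $\deg_{G_1}(u_1)=0$, and the length-two detour $(u_1,u_2),(w,v_2),(u_1,v_2)$ through a $G_1$-neighbour $w$ together with the loop-freeness observation for the lower bound when $\deg_{G_1}(u_1)>0$. Note that the paper itself gives no proof---it imports the proposition verbatim from \cite[Proposition~5.12]{handbook}---and your case analysis is essentially the standard argument for that result, so your write-up agrees with the cited source rather than diverging from anything in the paper.
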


We use Propositions~\ref{prop:disjoint_union} and~\ref{prop:lexicographicproductgraphdistance} to provide a characterization of the $k$-independence number of lexicographic graph products for $k \geq 2$. Let $\iota(G)$ denote the number of isolated vertices of graph $G$, and let $\mathcal{C}(G)$ denote the set of connected components of graph $G$.

\begin{theorem}
	\label{thm:product_graphs_lexicographic}
	For all graphs $G_1, G_2$ and all values $k \geq 2$, $$\alpha_k(G_1 \cdot G_2) = \alpha_k(G_1) + \iota(G_1) (\alpha_k(G_2) - 1).$$ 
\end{theorem}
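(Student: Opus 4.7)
The plan is to combine the distance formula of Proposition~\ref{prop:lexicographicproductgraphdistance} with the fact that $k \geq 2$ in a structural way: any two vertices of a $k$-independent set that share a \emph{non-isolated} first coordinate are forced to be at distance at most $2 \leq k$, so each non-isolated fiber contributes at most one point; meanwhile, isolated fibers behave like independent copies of $G_2$, and the first-coordinate projection is itself $k$-independent in $G_1$. I would prove equality by matching lower and upper bounds.

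For the lower bound, let $S_1$ be a maximum $k$-independent set in $G_1$ and $T$ a maximum $k$-independent set in $G_2$. Since an isolated vertex of $G_1$ is at infinite distance from every other vertex, we may assume $S_1$ contains every isolated vertex (otherwise add it, preserving $k$-independence). Fix any $v_0 \in V(G_2)$ and set
\[ S \;=\; \bigcup_{u \in S_1,\ \deg_{G_1}(u) > 0} \{(u, v_0)\} \;\cup\; \bigcup_{u \in S_1,\ \deg_{G_1}(u) = 0} \{u\} \times T. \]
A short case split using Proposition~\ref{prop:lexicographicproductgraphdistance} (distinct first coordinates give distance $>k$ in $G_1 \cdot G_2$; equal isolated first coordinates give distance $>k$ in $G_2$; no equal non-isolated first coordinates occur) shows $S$ is $k$-independent, and $|S| = (\alpha_k(G_1)-\iota(G_1)) + \iota(G_1)\,\alpha_k(G_2) = \alpha_k(G_1) + \iota(G_1)(\alpha_k(G_2)-1)$.

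For the upper bound, let $S$ be any $k$-independent set in $G_1 \cdot G_2$ and let $\pi_1(S)$ be its projection to $V(G_1)$. The first case of Proposition~\ref{prop:lexicographicproductgraphdistance} implies $\pi_1(S)$ is $k$-independent in $G_1$, so $|\pi_1(S)| \leq \alpha_k(G_1)$. The third case, combined with $k \geq 2$, forces at most one vertex of $S$ per non-isolated element of $\pi_1(S)$. The second case says the fiber of $S$ over any isolated $u \in \pi_1(S)$ is $k$-independent in $G_2$, hence has size at most $\alpha_k(G_2)$. Writing $b$ for the number of isolated vertices of $G_1$ that appear in $\pi_1(S)$ and $a$ for the remaining (non-isolated) contributors, we get $|S| \leq a + b\,\alpha_k(G_2)$ subject to $a + b \leq \alpha_k(G_1)$, $b \leq \iota(G_1)$. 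Since $\alpha_k(G_2) \geq 1$, this linear program is maximized at $b = \iota(G_1)$, $a = \alpha_k(G_1) - \iota(G_1)$, giving the required bound.

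The main subtlety — really the only one — is justifying that the optimum of the linear program is attainable, i.e.\ that $\alpha_k(G_1) \geq \iota(G_1)$. This is exactly the earlier observation that every isolated vertex may be added to any $k$-independent set of $G_1$, which one could also phrase via Proposition~\ref{prop:disjoint_union} by decomposing $G_1$ into its non-isolated part and $\iota(G_1)$ isolated vertices. The same proposition in fact yields an alternative route: decompose $G_1 \cdot G_2$ as the disjoint union of $G_1' \cdot G_2$ (where $G_1'$ is $G_1$ minus its isolated vertices) together with $\iota(G_1)$ disjoint copies of $G_2$, and then verify the easier identity $\alpha_k(G_1' \cdot G_2) = \alpha_k(G_1')$, which holds because every fiber in $G_1' \cdot G_2$ contributes at most one vertex when $G_1'$ has no isolated vertices.
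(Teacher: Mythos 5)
Your proof is correct, but it is organized quite differently from the paper's. The paper first decomposes $G_1$ into connected components and invokes Proposition~\ref{prop:disjoint_union} to write $\alpha_k(G_1 \cdot G_2) = \sum_{C} \alpha_k(C \cdot G_2)$; it then proves the local identity $\alpha_k(C \cdot G_2) = \alpha_k(C)$ for every component $C$ with at least two vertices (via the same two observations you use: at most one vertex per fiber since equal non-isolated first coordinates force distance $\min(\delta_{G_2}, 2) \leq 2 \leq k$, and the converse construction $S^* \times \{v\}$), and finally sums over components. You instead work globally on $G_1 \cdot G_2$ with no decomposition: an explicit extremal construction for the lower bound (non-isolated vertices of a maximum $k$-independent set $S_1$ paired with a fixed $v_0$, isolated ones blown up by a maximum $k$-independent set of $G_2$), and for the upper bound a projection-plus-fiber count, with the constraints $a + b \leq \alpha_k(G_1)$ and $b \leq \iota(G_1)$ combined via $a + b\,\alpha_k(G_2) = (a+b) + b(\alpha_k(G_2)-1)$. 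Both routes rest on exactly the same case analysis of Proposition~\ref{prop:lexicographicproductgraphdistance}, so the mathematical content is shared; what yours buys is an explicit description of an optimal $k$-independent set in the product and a transparent view of which constraints bind, while the paper's buys modularity (Proposition~\ref{prop:disjoint_union} does the bookkeeping that your linear-program step does by hand). Two small remarks: your worry about ``attainability of the LP optimum'' is misplaced for the upper bound --- the inequality $(a+b) + b(\alpha_k(G_2)-1) \leq \alpha_k(G_1) + \iota(G_1)(\alpha_k(G_2)-1)$ holds for every feasible $(a,b)$ since $\alpha_k(G_2) \geq 1$, and the fact $\alpha_k(G_1) \geq \iota(G_1)$ is needed only in your lower-bound construction, where you already justified it (indeed every \emph{maximum} $k$-independent set of $G_1$ must contain all isolated vertices, so no augmentation is even needed). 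Also, the alternative route you sketch in your last paragraph, splitting off $G_1'$ ($G_1$ minus its isolated vertices) and proving $\alpha_k(G_1' \cdot G_2) = \alpha_k(G_1')$, is essentially the paper's proof, except coarser: the paper decomposes per component, though, as you observe, per-component granularity is unnecessary because the distance formula only cares whether $\deg_{G_1}(u_1) = 0$.
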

\begin{proof}
	Let $\mathcal{C}(G_1)$ be the collection of connected components of graph $G_1$. Moreover, let $\mathcal{C}(G_1)$ be partitioned by the sets $\mathcal{C}_1$ and $\mathcal{C}_2$, where $\mathcal{C}_1$ is the set of all isolated vertices of graph $G_1$, and $\mathcal{C}_2$ is the set of all connected components of graph $G_1$ containing at least two vertices. We note that product graph $G_1 \cdot G_2$ is the disjoint union of the graphs $C \cdot G_2$ for $C \in \mathcal{C}(G_1)$. Then, by Proposition~\ref{prop:disjoint_union}, it follows that 
	\begin{align*}
	\alpha_k(G_1\cdot G_2) &= \sum_{C \in \mathcal{C}(G_1)} \alpha_k(C \cdot G_2)= \sum_{C \in \mathcal{C}_1} \alpha_k(C \cdot G_2) + \sum_{C \in \mathcal{C}_2} \alpha_k(C \cdot G_2).
	\end{align*}
	
	Consider a component $C \in \mathcal{C}_1$. By the definition of set $\mathcal{C}_1$, component $C$ consists of a single vertex. Therefore, $C \cdot G_2 = G_2$, and thus $\alpha_k(C \cdot G_2) = \alpha_k(G_2)$. Moreover, by the definition of set $\mathcal{C}_1$, $|\mathcal{C}_1| = \iota(G_1)$. It then follows that $$\sum_{C \in \mathcal{C}_1} \alpha_k(C \cdot G_2) = \iota(G_1) \cdot \alpha_k(G_2).$$
	
	Next, consider a component $C \in \mathcal{C}_2$. By the definition of set $\mathcal{C}_2$, component $C$ is connected and contains at least two vertices. 
	
	Let $S \subseteq V(G_1) \times V(G_2)$ be a maximum $k$-independent set in the graph $C \cdot G_2$. We observe that set $S$ does not contain a pair of vertices $(u, v_1), (u, v_2)$ such that $v_1 \neq v_2$. Namely, as component $C$ is connected, and as $|V(C)| \geq 2$, vertex $u$ has degree at least one. Hence, by Proposition~\ref{prop:lexicographicproductgraphdistance}, $\delta_{C \cdot G_2}((u, v_1), (u, v_2)) \leq 2 \leq k$. Thus, by the definition of $k$-independent sets, set $S$ contains at most one vertex with vertex $u$ as the first coordinate for all vertices $u \in V(C)$. 
	
	Let set $S' \subseteq V(C)$ be defined as $S' = \{u \in V(C) \, | \, (u,v) \in S\}$. Thus, $|S'| = |S|$. We moreover note that by Proposition~\ref{prop:lexicographicproductgraphdistance} and as $k \geq 2$, set $S$ is $k$-independent in product graph $C \cdot G_2$ if and only if set $S'$ is $k$-independent in component $C$. Thus, $\alpha_k(C) \geq \alpha_k(C \cdot G_2)$. 
	
	Next, let $S^* \subseteq V(C)$ be a maximum $k$-independent set in component $C$. Furthermore, let $v \in V(G_2)$ be an arbitrary vertex in graph $G_2$. Then, by Proposition~\ref{prop:lexicographicproductgraphdistance} and as $k \geq 2$, it follows that the set $S^* \times \{v\} \subseteq V(C) \times V(G_2)$ forms a $k$-independent set in product graph $C \cdot G_2$. Because $|S^* \times \{v\}| = |S^*|$, we find that $\alpha_k(C \cdot G_2) \geq \alpha_k(C)$.
	
	Thus, $\alpha_k(C \cdot G_2) = \alpha_k(C)$. We note that by Proposition~\ref{prop:disjoint_union}, 
	\begin{align*}
	\alpha_k(G_1) = \sum_{C \in \mathcal{C}(G_1)}\alpha_k(C)= \sum_{C \in \mathcal{C}_1}\alpha_k(C) + \sum_{C \in \mathcal{C}_2}\alpha_k(C).
	\end{align*}
	Because each component $C \in \mathcal{C}_1$ consists of a single vertex by the definition of set $\mathcal{C}_1$, it follows that $\alpha_k(C) = 1$ for each $C \in \mathcal{C}_1$. Hence, and due to $|\mathcal{C}_1| = \iota(G_1)$, $$\sum_{C \in \mathcal{C}_2}\alpha_k(C) = \alpha_k(G_1) - \iota(G_1).$$ Therefore, 
	\begin{align*}
	\sum_{C \in \mathcal{C}_2}\alpha_k(C \cdot G_2) = \sum_{C \in \mathcal{C}_2}\alpha_k(C)= \alpha_k(G_1) - \iota(G_1),
	\end{align*}
	and thus,
	\begin{align*}
	\alpha_k(G_1 \cdot G_2) &= \sum_{C \in \mathcal{C}_1}\alpha_k(C \cdot G_2) + \sum_{C \in \mathcal{C}_2}\alpha_k(C \cdot G_2)\\
	&= \iota(G_1) \cdot \alpha_k(G_2) + \alpha_k(G_1) - \iota(G_1)\\
	&= \alpha_k(G_1) + \iota(G_1) (\alpha_k(G_2) - 1),
	\end{align*}
	as desired.
\end{proof}

%%%%%%%%%%%%%%%%%%%%%%%%%%%%%%%%%%%%%%%%%%%%%%%%%%%%%%%%%%%%%%%%%%%%%%%%%%%%%%%%%%
\section{Concluding remarks}
%%%%%%%%%%%%%%%%%%%%%%%%%%%%%%%%%%%%%%%%%%%%%%%%%%%%%%%%%%%%%%%%%%%%%%%%%%%%%%%%%%

We conclude by observing a relationship between the $k$-independence numbers of the four considered graph products.

\begin{remark}
	\label{rem:product_graphs_relation}
	For all graphs $G_1, G_2$, $$\alpha_k(G_1 \cdot G_2) \leq \alpha_k(G_1 \boxtimes G_2) \leq \alpha_k(G_1 \square G_2),\, \alpha_k(G_1 \times G_2).$$
\end{remark}

\begin{proof}
	By the definitions of the graph products, it holds that \[E(G_1 \square G_2), E(G_1 \times G_2) \subseteq E(G_1 \boxtimes G_2) \subseteq E(G_1 \cdot G_2),\] from which the result directly follows.
\end{proof}

Remark~\ref{rem:product_graphs_relation} extends \cite[Theorem~2.6]{Jha1994IndependenceGraphs}.

Note that as a result of Remark~\ref{rem:product_graphs_relation}, also Theorem~\ref{thm:product_graphs_strong_lower_bound} is tight for $G_1, G_2$ both complete graphs and $k > 1$. We additionally observe that the inequality $\alpha(G_1 \square G_2) \leq \alpha(G_1 \times G_2)$ in~\cite[Theorem~2.6]{Jha1994IndependenceGraphs} does not extend, as $\alpha_2(P_3 \square K_{1,3}) > \alpha_2(P_3 \times K_{1,3})$.

The \emph{distance-$k$ chromatic number}, which is the chromatic number of $G^k$, has received quite some attention since its introduction by Alon and Mohar \cite{Alon1993ThePowers}, see for instance the work by Kang and Pirot~\cite{KP2016,KP2018}. Our upper bounds on the $k$-independence number of graph products directly yield lower bounds on the corresponding distance-$k$ chromatic number. 

\subsection*{Acknowledgements}
The research of Aida Abiad has been partially supported by the FWO grant 1285921N.

\bibliographystyle{acm}
\bibliography{bibliography.bib}

\end{document}